\documentclass[11pt]{amsart}

\usepackage{amssymb,amsmath,enumerate,epsfig}%,epsf}%t1enc
\usepackage{amsfonts,color}
\usepackage{a4,latexsym}
\usepackage{parskip}
\usepackage[all]{xy}
% causes trouble with table of contents, but has nicer paragrahps...
%\usepackage{showkeys}
\usepackage{hyperref}
\hypersetup{pdftitle=dyn}

\newcommand{\C} {\mathbb{C}}
\newcommand{\Q} {\mathbb{Q}}
\newcommand{\N}  {\mathbb{N}}

\newcommand{\Z}{\mathbb{Z}}

\newcommand{\Num}{\mathop{\rm Num}}

\newcommand{\Div}{\mathop{\rm Div}}

\newtheorem{Theorem}{Theorem}[section]
\newtheorem{Lemma}[Theorem]{Lemma}

\newtheorem{Corollary}[Theorem]{Corollary}

\theoremstyle{remark}

\newtheorem{Remark}[Theorem]{Remark}

\theoremstyle{definition}

\newtheorem{Definition}[Theorem]{Definition}

\newtheorem{Convention}[Theorem]{Convention}

\begin{document}

\title{Divisibilities among nodal curves}

\author{Matthias Sch\"utt}
\address{Institut f\"ur Algebraische Geometrie, Leibniz Universit\"at
  Hannover, Welfengarten 1, 30167 Hannover, Germany}

    \address{Riemann Center for Geometry and Physics, Leibniz Universit\"at
  Hannover, Appelstrasse 2, 30167 Hannover, Germany}

\email{schuett@math.uni-hannover.de}

%\subjclass[2010]{14J28; 14J27}
%%
%\keywords{K3 surface, wild automorphism, Lefschetz fixed point formula}
%%
%%

%
\date{May 31, 2017}

\begin{abstract}
We prove that there are no effective or anti-effective  classes
of square $-1$ or $-2$
 arising from nodal curves
on smooth algebraic surfaces by way of divisibility.
This general fact has interesting applications to Enriques and K3 surfaces.
The proof relies on specific properties of root lattices and their duals.
\end{abstract}
%
%
% \begin{abstract}
%This paper concerns K3 surfaces with automorphisms of order 11 in arbitrary characteristic.
%Specifically we study the wild case and prove that a general such surface in characteristic 11
%has Picard number 2.
%We also construct K3 surfaces with an automorphism of order 11 in every characteristic,
%and supersingular K3 surfaces whenever possible.
% \end{abstract}
%% 
 \maketitle

 \section{Introduction}
 \label{s:intro}
 
 Nodal curves are among the most intriguing objects on algebraic surfaces.
The terminology refers to smooth rational curves $C$ with self-intersection $C^2=-2$.
By the adjunction formula, nodal curves can be contracted 
to rational double point singularities
without affecting the dualizing sheaf;
this offers one explanation for our interest in configurations of nodal curves.
%(see...)

Consider the  classes of the nodal curves, or more generally $(-2)$-curves, in the N\'eron--Severi lattice 
\[
\Num(S) = \Div(S)/\equiv
\]
of divisors modulo numerical equivalence
(this equals the N\'eron--Severi group modulo the torsion).
Then divisibilities can lead to the existence of 
certain coverings of $S$ which are often interesting in their own right.
Classically this problem has been studied for K3 surfaces (which we shall come back to momentarily
-- think of Kummer surfaces, for instance).
Here we will develop a general result which we hope to be of independent relevance:

\begin{Theorem}
\label{thm}
Let $R\subset \Num(S)$ be a root lattice generated by $(-2)$-curves on a smooth algebraic surface $S$.
Denote the primitive closure by
\[
R'=(R\otimes\Q)\cap \Num(S) \;\;\; 
\text{ and let } \;\;\;
D\in R'\setminus R.
\]
If $D^2=-2$ or $-1$,
then $D$ is neither effective nor anti-effective.
\end{Theorem}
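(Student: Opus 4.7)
The plan is to prove the stronger statement that no element of $R' \setminus R$ can be effective, from which the theorem follows since $R' \setminus R$ is closed under $D \mapsto -D$.

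Suppose for contradiction $D \in R' \setminus R$ is represented by an effective divisor $\Delta$. Split $\Delta = \sum_i m_i C_i + E$, where $m_i \in \Z_{\geq 0}$ records the multiplicity of $C_i$ in $\Delta$ and $E$ is the effective part containing no $C_i$ as a component. Then $[E] = D - \sum_i m_i [C_i]$ still lies in $R \otimes \Q$, and $E \cdot C_j \geq 0$ for every $j$ because $C_j$ is distinct from each prime component of $E$. Writing $[E] = \sum_i q_i [C_i]$ with $q_i \in \Q$, the inequalities $[E] \cdot C_j \geq 0$ translate to $(Mq)_j \geq 0$, where $M$ is the negative-definite Gram matrix of $C_1, \ldots, C_n$, i.e.\ minus the Cartan matrix of the ambient ADE root system.

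The key lattice-theoretic input is that the inverse of any simply-laced finite-type Cartan matrix has entrywise non-negative entries --- equivalently, every fundamental weight is a non-negative rational combination of simple roots, as is visible from the explicit formula $(C_{A_n}^{-1})_{ij} = \min(i,j)(n+1-\max(i,j))/(n+1)$ and analogous tables for $D_n$, $E_6$, $E_7$, $E_8$. Hence $M^{-1}$ has entrywise non-positive entries, and $q = M^{-1}(Mq)$ yields $q_i \leq 0$ for all $i$. Now fix an ample class $H$ on $S$ (the K3 and Enriques applications are projective). Each $C_i$ is effective, so $C_i \cdot H > 0$, and therefore $[E] \cdot H = \sum_i q_i (C_i \cdot H) \leq 0$. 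On the other hand $E$ is effective, giving $[E] \cdot H \geq 0$ with equality only when $E = 0$. Hence $E = 0$, $\Delta = \sum_i m_i C_i$, and $D = [\Delta] \in R$, contradicting $D \in R' \setminus R$.

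The main obstacle is verifying the non-negativity of $C^{-1}$ uniformly across the ADE types; this is classical but requires a short inspection. A noteworthy feature of this approach is that it does not use the hypothesis $D^2 \in \{-1,-2\}$, so it proves a strictly stronger statement. The author's restriction to these small squares may reflect a purely lattice-theoretic proof bypassing ampleness and therefore applicable to non-projective $S$: in that setting one would reduce to an irreducible root summand and handle case by case the short list of low-norm classes in $R^*/R$ (the middle coset of $A_3$, the vector cosets of $D_n$, and the spinor coset of $D_4$ for $D^2 = -1$; the relevant cosets of $A_7$, $A_8$, and the spinor coset of $D_8$ for $D^2 = -2$).
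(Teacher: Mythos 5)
Your proof is correct, and it takes a genuinely different route from the paper's. The paper works inside the dual lattice: it splits $w$ into its components in the duals $R_i^\vee$ of the irreducible summands, classifies the Weyl orbits of all dual vectors of square $\geq -2$ (this is exactly where the hypothesis $D^2\in\{-1,-2\}$ enters), and then descends by reflections: whenever $w\cdot C_j=-1$, the curve $C_j$ lies in the fixed part of the effective class $w$, so $w-C_j$ is again effective and, being the reflection of $w$ in $C_j$, has the same square; iterating lands on the dual vector of a simple vertex, all of whose coefficients are negative (Lemma \ref{lem:a_i}), giving the contradiction. You instead decompose the effective divisor itself, $\Delta=\sum_i m_iC_i+E$ with $E$ sharing no component with the configuration, and run a Mumford--Artin-type negativity argument: $E\cdot C_j\geq 0$ together with the entrywise non-positivity of $M^{-1}$ forces the coefficients of $[E]$ to be $\leq 0$, and pairing with an ample class kills $E$. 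Your key matrix fact (entrywise non-negativity of inverse ADE Cartan matrices) is classical and is precisely the extension of the paper's Lemma \ref{lem:a_i} from simple vertices to all vertices; alternatively you could avoid citing it via the standard splitting $q=q^+-q^-$, which uses only negative definiteness and the non-negativity of the off-diagonal entries of $M$. What your approach buys: it is shorter, bypasses the whole small-vector/Weyl-orbit apparatus, and never uses $D^2\in\{-1,-2\}$, so it proves the stronger statement that every effective class in $R\otimes\Q$ is a non-negative \emph{integral} combination of the $C_i$, in particular lies in $R$ --- essentially the classical argument for even sets of nodal curves. What the paper's approach buys is the explicit small-vector and orbit tables, which are of independent use in the author's Enriques applications. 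One correction to your closing speculation: ampleness is not a genuine restriction, since a smooth complete algebraic surface is automatically projective, and the paper's own endgame (a nonzero class cannot be simultaneously effective and anti-effective) relies on the same positivity; the restriction on $D^2$ reflects the paper's method and intended applications, not an attempt to avoid projectivity.
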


The proof of Theorem \ref{thm} largely builds on basic properties of root lattices and their duals,
especially in relation with reflections,
which we will discuss in the subsequent sections before the proof is completed in Section \ref{s:pf}.
Here we would like to point out two interesting applications to K3 surfaces and Enriques surfaces 
-- where $(-2)$-curves are automatically smooth rational (i.e. nodal) by adjunction.

\begin{Corollary}
Let $S$ be a K3 surface and  $R\subset \Num(S)$ be a root lattice generated by nodal curves on $S$.
Its primitive closure $R'$
contains no vectors outside $R$ with square $-2$:
\[
D\in R', \; D^2>-4 \; \Longrightarrow \; D\in R.
\]
\end{Corollary}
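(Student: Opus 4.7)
My plan is to reduce the corollary to the single case $D^2 = -2$, then invoke Theorem \ref{thm} together with Riemann--Roch on the K3 surface.

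First I would observe that the primitive closure $R'$ inherits negative-definiteness from the root lattice $R$: indeed $R'\otimes\Q = R\otimes\Q$, and a root lattice is negative-definite since it is spanned by $(-2)$-classes. Moreover $\Num(S)$ is even for a K3 surface (as $K_S=0$, the intersection form on $\Pic(S)=\Num(S)$ is even), so $R'\subset \Num(S)$ is even as well. Thus for any $D\in R'$ the square $D^2$ is a non-positive even integer, and the hypothesis $D^2 > -4$ forces $D^2\in\{0,-2\}$.

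The case $D^2=0$ is immediate: negative-definiteness of $R'$ gives $D=0\in R$. The interesting case is $D^2=-2$. Here I would apply Riemann--Roch on the K3 surface,
\[
\chi(D) \;=\; \frac{D^2}{2} + 2 \;=\; 1,
\]
combined with Serre duality $h^2(D) = h^0(K_S-D) = h^0(-D)$, to conclude
\[
h^0(D) + h^0(-D) \;\geq\; \chi(D) \;=\; 1.
\]
Consequently $D$ or $-D$ must be effective.

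Now suppose for contradiction that $D\in R'\setminus R$. Then Theorem \ref{thm} asserts that $D$ is neither effective nor anti-effective, contradicting the Riemann--Roch output above. Hence $D\in R$, as claimed.

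There is essentially no obstacle beyond assembling these ingredients; the content of the corollary really lies in Theorem \ref{thm}. The only point that might need a word of care is the reduction step — namely that negative-definiteness of $R'$ combined with the evenness of $\Num(S)$ on a K3 collapses the range $D^2>-4$ to just the two cases $D^2\in\{0,-2\}$, which is what makes Theorem \ref{thm} applicable in the form stated.
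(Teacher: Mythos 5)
Your proposal is correct and is exactly the paper's argument: the paper disposes of the corollary in one line, citing Theorem \ref{thm}, Riemann--Roch, and the evenness of $\Num(S)$, which are precisely the ingredients you assemble (evenness plus negative-definiteness to reduce to $D^2=-2$, then Riemann--Roch with Serre duality to force $D$ or $-D$ effective, contradicting Theorem \ref{thm}). You have simply written out in full the details the paper leaves implicit.
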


The corollary follows immediately from Theorem \ref{thm} as a consequence of Riemann--Roch
(and the evenness of $\Num(S)$).
It has received some attention before in special cases,
for instance for $R$ decomposing into orthogonal copies of the root lattice $A_2$
(see \cite{b}, \cite{br}, \cite{RS}), or for Kummer surfaces.

Theorem \ref{thm} also has a big impact on possible configurations of nodal curves
on Enriques surfaces: here the configuration often forces divisibilities
(since $\Num(S)$ is unimodular), so Theorem \ref{thm} provides some severe restrictions
-- without appeal to any K3 surfaces, so that this  holds for all Enriques surfaces, even in characteristic two.
For details, the reader is referred to \cite{S-Q-hom}, \cite{S-Q_l}, \cite{S-Q_2}.

\begin{Convention}
Throughout this paper, we take root lattices to be negative-definite
(in agreement with the geometric picture from Theorem \ref{thm}).

\end{Convention}

\section{Basics on Root lattices}

We start by reviewing basic properties of root lattices.
Standard references include \cite{bourbaki-lie}, \cite{CS}.

Any root lattice $R$ admits an orthogonal decomposition into irreducible root lattices $R_i$ of ADE-type,
unique up to order:
\begin{eqnarray}
\label{eq:oplus}
R = \bigoplus_{j=1}^m R_j.
\end{eqnarray}
Given an embedding into some integral lattice $L$,
\[
R \hookrightarrow L
\]
(where we will later take $L=\Num(S)$),
the primitive closure
\[
R' = (R\otimes \Q)\cap L
\]
naturally sits inside the dual of $R$:
\[
R' \hookrightarrow R^\vee = \bigoplus_i R_i^\vee.
\]
For ease of notation, assume for the moment that $R$ is irreducible of rank $n$.
We can thus interpret the elements in $R'\setminus R$ as elements of $R^\vee\setminus R$,
classified modulo $R$ by the non-zero elements of the discriminant group
\[
A_R = R^\vee/R.
\]
This is a finite abelian group whose shape depends on $R$ as detailed in the Table \ref{T1}.
The values  modulo $2\Z$ which the quadratic form assumes on $R^\vee$ only
depend on the representatives of $A_R$. These can easily be worked out, 
for instance in terms of certain dual vectors for the standard basis of $R$
corresponding to the associated Dynkin diagram:

\begin{figure}[ht!]
\setlength{\unitlength}{.6mm}
\begin{picture}(80,15)(-5,20)
%\put(-15,31){$A_n$}
\multiput(3,32)(20,0){5}{\circle*{1.5}}
\put(3,32){\line(1,0){80}}
\put(2,25){$a_1$}
\put(22,25){$a_2$}
\put(50,25){$\hdots$}
\put(82,25){$a_n$}
\put(100,30){$(A_n)$}
%
%\put(-15,7){$D_r:$}
%\multiput(3,8)(20,0){5}{\circle*{3}}
%\put(3,8){\line(1,0){80}}
%\put(83,8){\line(2,1){17}}
%\put(83,8){\line(2,-1){17}}
%\put(100,16){\circle*{3}}
%\put(100, 0){\circle*{3}}
%\put(2,1){$\alpha_1$}
%\put(22,1){$\alpha_2$}
%\put(80,1){$\alpha_{r-2}$}
%\put(105,16){$\alpha_{r-1}$}
%\put(105, 0){$\alpha_{r}$}
%
%\put(-15,-33){$E_r:$}
%\multiput(3,-32)(20,0){6}{\circle*{3}}
%\put(3,-32){\line(1,0){100}}
%\put(2,-39){$\alpha_1$}
%\put(22,-39){$\alpha_2$}
%\put(42,-39){$\alpha_3$}
%\put(62,-39){$\alpha_4$}
%\put(43,-32){\line(0,1){20}}
%\put(43,-12){\circle*{3}}
%\put(47,-13){$\alpha_r$}
%\put(102,-39){$\alpha_{r-1}$}

\end{picture}
%\caption{Dynkin diagram of type $A_n$}  
%\label{DynkinA}
\end{figure}

\begin{figure}[ht!]
\setlength{\unitlength}{.6mm}
\begin{picture}(100,30)(-7,-5)
%\put(-15,31){$A_r:$}
%\multiput(3,32)(20,0){5}{\circle*{3}}
%\put(3,32){\line(1,0){80}}
%\put(2,25){$\alpha_1$}
%\put(22,25){$\alpha_2$}
%\put(82,25){$\alpha_r$}
%
%\put(-15,7){$D_k:$}
\multiput(3,8)(20,0){5}{\circle*{1.5}}
\put(3,8){\line(1,0){80}}
\put(83,8){\line(2,1){17}}
\put(83,8){\line(2,-1){17}}
\put(100,16){\circle*{1.5}}
\put(100, 0){\circle*{1.5}}
\put(2,1){$d_1$}
\put(22,1){$d_2$}
\put(78,1){$d_{n-2}$}
\put(103,16){$d_{n-1}$}
\put(103, 0){$d_{n}$}
\put(120,8){$(D_n)$}

%\put(-15,-33){$E_r:$}
%\multiput(3,-32)(20,0){6}{\circle*{3}}
%\put(3,-32){\line(1,0){100}}
%\put(2,-39){$\alpha_1$}
%\put(22,-39){$\alpha_2$}
%\put(42,-39){$\alpha_3$}
%\put(62,-39){$\alpha_4$}
%\put(43,-32){\line(0,1){20}}
%\put(43,-12){\circle*{3}}
%\put(47,-13){$\alpha_r$}
%\put(102,-39){$\alpha_{r-1}$}

\end{picture}
%\caption{Dynkin diagrams of type $D_k$}  
%\label{DynkinAD}
\end{figure}

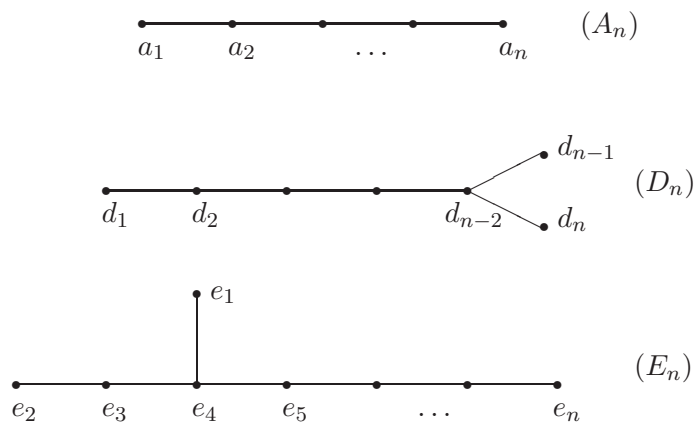
\begin{figure}[ht!]
\setlength{\unitlength}{.6mm}
\begin{picture}(120,30)(3,-40)
%\put(-15,31){$A_r:$}
%\multiput(3,32)(20,0){5}{\circle*{3}}
%\put(3,32){\line(1,0){80}}
%\put(2,25){$\alpha_1$}
%\put(22,25){$\alpha_2$}
%\put(82,25){$\alpha_r$}
%%
%\put(-15,7){$D_r:$}
%\multiput(3,8)(20,0){5}{\circle*{3}}
%\put(3,8){\line(1,0){80}}
%\put(83,8){\line(2,1){17}}
%\put(83,8){\line(2,-1){17}}
%\put(100,16){\circle*{3}}
%\put(100, 0){\circle*{3}}
%\put(2,1){$\alpha_1$}
%\put(22,1){$\alpha_2$}
%\put(80,1){$\alpha_{r-2}$}
%\put(105,16){$\alpha_{r-1}$}
%\put(105, 0){$\alpha_{r}$}
%
%\put(-15,-33){$E_r:$}
\multiput(3,-32)(20,0){7}{\circle*{1.5}}
\put(3,-32){\line(1,0){120}}
\put(2,-39){$e_2$}
\put(22,-39){$e_3$}
\put(42,-39){$e_4$}
\put(62,-39){$e_5$}
\put(43,-32){\line(0,1){20}}
\put(43,-12){\circle*{1.5}}
\put(46,-13){$e_1$}
\put(92,-39){$\hdots$}
\put(122,-39){$e_n$} 
\put(140,-30){$(E_n)$}

\end{picture}
\caption{Dynkin diagrams of type $A_r, D_r, E_r$}  
\label{DynkinAD}
\end{figure}

$$
\begin{array}{c|c|c|c|c|c|c}
R & A_n & D_n (n\geq 4 \text{ even}) & D_n (n>4 \text{ odd}) & E_6 & E_7 & E_8\\
\hline
A_R & \Z/(n+1)\Z & \Z/2\Z\times \Z/2\Z &\Z/4\Z & \Z/3\Z & \Z/2\Z & 0\\
\text{generators} & a_1^\vee &
d_1^\vee, d_n^\vee &
d_n^\vee & e_6^\vee & e_7^\vee & 0
\\
\text{squares} & 
-\frac{n}{n+1} &
-1, -\frac n4 &
-\frac n4 &
-\frac 43 &
-\frac 32 & 0
\\
\end{array}
$$
\begin{table}[ht!]
\caption{Discriminant group data}
\label{T1}
\end{table}
%
%\begin{Remark}
%It will play an important role later on
%that the generators from the table,
%when expressed in terms of the given $\Q$-basis of $R^\vee$,
%all have $\Q$-rational entries negative as one can easily check.
%\end{Remark}
%

Since $E_8$ is unimodular, in particular $E_8 = E_8^\vee$,
we will not have to consider $E_8$ for our purposes
and thus omit this lattice for the rest of this paper without further mention.

\section{Dual vectors}

We continue by elaborating on some important properties of the dual vectors of $R$.
Here we shall mostly be concerned with very specific ones
which relate to the appearance of $R$ in the singular fibers of elliptic fibrations
(as classified by Kodaira over $\C$)
minus any simple component.

\begin{Definition}
We call a vertex $v$ of $R$ simple if the corresponding fiber component in 
the resulting singular fiber is simple.
\end{Definition}

Equivalently $v$ has multiplicity $1$ in the primitive isotropic divisor in the 
extended Dynkin diagram $\tilde R$.
Concretely, all vertices of root lattices of type $A_n$ are simple,
but only 
\[
d_1, d_{n-1}, d_n \; \text{ for } D_n \;\;\; \text{ and } \;\;\; e_2, e_6 \; \text{ for } E_6, \;\;
e_7 \; \text{ for } E_7.
\] 
The meaning of simple vertices  is illustrated by the following interrelated classical facts:

\begin{Lemma}
The non-zero elements of $A_R$ are exactly represented by the dual vectors $v^\vee$
for the simple vertices $v\in R$.
\end{Lemma}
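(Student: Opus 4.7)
Plan: The lemma has two parts to verify: (i) for each simple vertex $v$, the dual vector $v^\vee$ represents a nonzero class in $A_R$; and (ii) every nonzero class of $A_R$ arises this way. I would first match counts by inspection: comparing the list of simple vertices in the preceding paragraph against Table~\ref{T1}, the number of simple vertices in each of $A_n, D_n, E_6, E_7$ equals $|A_R|-1$, namely $n=(n+1)-1$, $3=4-1$, $2=3-1$ and $1=2-1$. Therefore, once the $v^\vee$ are known to yield pairwise distinct nonzero cosets, they must necessarily fill out $A_R\setminus\{0\}$, so (ii) reduces to distinctness together with (i).

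Part (i) is immediate from the defining property of a dual basis: $v^\vee$ has strictly rational, non-integer coefficients when expanded in the basis of simple roots of $R$, and so $v^\vee\notin R$.

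For distinctness I would then argue case by case using the explicit formulas for the $v^\vee$ that already underlie Table~\ref{T1}. In type $A_n$, identifying $A_R$ with $\Z/(n+1)\Z$ by $a_1^\vee\mapsto 1$, one computes $a_i^\vee\equiv i\pmod{n+1}$, so the $n$ simple dual vectors realize all nonzero residues. In type $E_7$ there is only one simple vertex, and nothing to check. In type $E_6$, the two simple dual vectors $e_2^\vee$ and $e_6^\vee$ are exchanged by the order-two automorphism of the Dynkin diagram, so their images in $A_R=\Z/3\Z$ are inverse to one another, hence both nonzero and distinct. For $D_n$ the argument splits by parity of $n$ as in Table~\ref{T1}: when $n$ is even, a direct computation yields the relation $d_1^\vee\equiv d_{n-1}^\vee+d_n^\vee\pmod R$, so the three cosets realize the three nonzero classes of $\Z/2\Z\oplus\Z/2\Z$; when $n$ is odd, one similarly finds $d_{n-1}^\vee\equiv -d_n^\vee$ and $d_1^\vee\equiv 2d_n^\vee\pmod R$, hitting $\{1,2,3\}\subset \Z/4\Z$.

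The only mildly laborious step is the $D_n$ case, where one must split on parity and compute the three cosets explicitly; the other types are essentially immediate. None of this is conceptually deep --- it merely makes coordinate-explicit the classical statement that $A_R$ acts simply transitively on the set of multiplicity-one vertices of the extended Dynkin diagram $\tilde R$ (including the affine node), so that its nonzero elements correspond bijectively to the simple vertices of $R$.
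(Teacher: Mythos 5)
The paper never proves this lemma: it is stated as one of several ``interrelated classical facts,'' with Bourbaki and Conway--Sloane as the standing references. So your self-contained, case-by-case verification is necessarily a different route, and it is a legitimate and essentially correct one. The counting scheme (in each of $A_n$, $D_n$, $E_6$, $E_7$ the number of simple vertices equals $|A_R|-1$, so pairwise distinct nonzero cosets must exhaust $A_R\setminus\{0\}$) is sound, and the explicit relations you use are the standard correct ones: $a_i^\vee\equiv i\,a_1^\vee \pmod{A_n}$, $d_1^\vee\equiv d_{n-1}^\vee+d_n^\vee$ for $n$ even, and $d_{n-1}^\vee\equiv-d_n^\vee$, $d_1^\vee\equiv 2d_n^\vee$ for $n$ odd. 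Your closing remark also correctly identifies the conceptual content: $A_R$ acts simply transitively on the multiplicity-one vertices of $\tilde R$, which is exactly the classical statement the paper is invoking.

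One caveat deserves mention. Your justification of part (i) --- that $v^\vee\notin R$ is ``immediate from the defining property of a dual basis'' because dual vectors have non-integer coefficients --- is not a valid general principle. Dual vectors of \emph{non-simple} vertices can perfectly well lie in $R$: for the central vertex of $D_4$ (in the paper's labeling) one has $d_2^\vee=-(d_1+2d_2+d_3+d_4)\in R$, and every dual vector of $E_8$ lies in $E_8$. Non-integrality of the coefficients is a fact to be checked for the simple vertices, not a formal consequence of duality. This matters concretely in your $E_6$ case, where distinctness genuinely relies on nonzero-ness (a class and its inverse in $\Z/3\Z$ coincide if and only if both are zero), so the argument there is circular as written. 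The patch is easy: in types $A_n$ and $D_n$ your congruence computations already exhibit the classes as nonzero, and for $E_6$, $E_7$ the squares $-\frac43$ and $-\frac32$ from Table \ref{T1} are not even integers, so $e_6^\vee$ and $e_7^\vee$ cannot lie in the even lattice $R$. With that repair the proof is complete.
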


Since $R^\vee\subset R\otimes\Q$, we can express each  vector $v\in R^\vee$
in terms of the standard basis $v_i$ as a sum
\begin{eqnarray}
\label{eq:v}
v=\sum_{i=1}^n \alpha_i v_i, \;\;\; \alpha_i\in\Q.
\end{eqnarray}

\begin{Lemma}
\label{lem:a_i}
If $v\in R$ is a simple vertex, then the dual vector $v^\vee$ has all $\alpha_i<0$.
\end{Lemma}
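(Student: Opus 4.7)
The plan is to translate the statement into an entrywise statement about the inverse Gram matrix. Let $G$ denote the Gram matrix of the standard basis $(v_1,\dots,v_n)$ of $R$, so that in the paper's negative-definite convention $G_{ii}=-2$ and $G_{ij}\in\{0,1\}$ for $i\neq j$, with $G_{ij}=1$ precisely when $v_i,v_j$ are connected by an edge of the Dynkin diagram. The dual basis is characterised by $v_k^\vee\cdot v_i=\delta_{ik}$; expanding $v_k^\vee=\sum_i\alpha_i v_i$ as in (\ref{eq:v}) and taking inner products with each $v_i$ shows that the tuple $(\alpha_i)_i$ is exactly the $k$-th column of $G^{-1}$. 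Hence Lemma \ref{lem:a_i} reduces to the claim that this column is entrywise strictly negative.

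To see this, observe that $-G$ is a symmetric non-singular M-matrix: its diagonal equals $2I$, its off-diagonal entries lie in $\{0,-1\}$, and $-G$ is positive definite because $R$ is negative definite. Writing $-G=2I-A$ with $A\geq 0$ the adjacency matrix of the (connected) Dynkin diagram, the spectral radius $\rho(A/2)$ is strictly less than one, so the Neumann series
\[
(-G)^{-1}=\tfrac12\sum_{k\geq 0}(A/2)^k
\]
converges entrywise. Every summand has non-negative entries, hence $(-G)^{-1}\geq 0$ entrywise. Connectedness of the Dynkin diagram further guarantees that for each pair $(i,j)$ some power $A^k$ has a strictly positive $(i,j)$-entry (a path of length $k$ exists in the diagram), so $(-G)^{-1}$ in fact has \emph{all} entries strictly positive. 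Therefore $G^{-1}=-(-G)^{-1}$ is entrywise strictly negative, which yields $\alpha_i<0$ for every $i$.

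The main (and essentially only) obstacle is justifying the positivity of $(-G)^{-1}$. A reader who prefers to avoid invoking M-matrix theory can instead treat the four ADE types separately: for $A_n$ the classical formula expresses each $a_k^\vee$ as a strictly negative rational combination of $a_1,\dots,a_n$, and for $D_n$, $E_6$, $E_7$ the dual vectors of the relevant simple vertices $d_1, d_{n-1}, d_n$, $e_2, e_6$, $e_7$ are tabulated in standard references (cf.\ \cite{bourbaki-lie}) and are directly checked to have all coefficients strictly negative. It is worth emphasising that the conceptual argument actually proves the stronger statement that $\alpha_i<0$ for \emph{every} vertex $v$, not just the simple ones; the simplicity hypothesis in Lemma \ref{lem:a_i} is needed only in the subsequent applications of these dual vectors, not in the proof itself.
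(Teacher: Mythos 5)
Your proof is correct. Note that the paper itself offers no proof of Lemma \ref{lem:a_i}: it is listed among the ``interrelated classical facts'' about root lattices, with the references \cite{bourbaki-lie}, \cite{CS} implicitly carrying the burden. Your argument is therefore a self-contained substitute for that citation, and the reduction is the right one: in the paper's negative-definite convention the Gram matrix is $G=A-2I$ (the negated Cartan matrix), the paper's normalization $v_k^\vee.v_i=\delta_{ik}$ (visible in the proof of Lemma \ref{lem:2cases}) makes the coefficients $\alpha_i$ of $v_k^\vee$ the $k$-th column of $G^{-1}$, and entrywise negativity of $G^{-1}$ is precisely the classical fact that the inverse Cartan matrix of an irreducible root system is entrywise positive, equivalently that every fundamental weight is a strictly positive combination of the simple roots. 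The Neumann-series argument is sound; the one step you leave implicit is why the spectral radius of $A$, and not merely its largest eigenvalue, is below $2$: positive definiteness of $2I-A$ bounds $\lambda_{\max}(A)<2$, and one then needs Perron--Frobenius for the non-negative matrix $A$ (or bipartiteness, Dynkin diagrams being trees) to get $\rho(A)=\lambda_{\max}(A)$. This is standard M-matrix material, which you explicitly invoke, so it is not a gap. Your closing observations are also accurate: simplicity of $v$ plays no role in this lemma (it matters only for Corollary \ref{cor:v.x}, via Remark \ref{rem:no}, and hence for Lemma \ref{lem:w.x}), and your use of connectedness matches the paper's standing assumption that $R$ is irreducible --- for reducible $R$ the conclusion would of course fail, the coefficients on the other components being zero rather than negative.
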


Moreover the simple vertices feature an intriguing relation to the roots of $R$:

\begin{Lemma}
Let $x\in R$ be a root,
expressed in terms of the standard basis $v_i$ as $x=\sum_{i=1}^n \alpha_i v_i$.
Then $\alpha_i\in\{-1,0,1\}$ for all simple vertices $v_i$.
\end{Lemma}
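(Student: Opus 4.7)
The plan is to reduce the lemma to the classical comparison of an arbitrary positive root with the highest root in an irreducible root system.

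First, by the orthogonal decomposition \eqref{eq:oplus}, any root $x\in R$ is supported in a single irreducible summand $R_j$, and $\alpha_i=0$ for every vertex $v_i$ outside that summand. I may therefore assume from the outset that $R$ is irreducible. In this case the standard basis $v_1,\ldots,v_n$ is a system of simple roots of $R$, so every root $x$ is either positive (all $\alpha_i\geq 0$) or negative (all $\alpha_i\leq 0$); replacing $x$ by $-x$ if necessary, I will treat only the positive case.

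The key input is then the classical dominance of the highest root (see \cite{bourbaki-lie}): if $\theta=\sum_i n_i v_i$ denotes the highest root of $R$, then every positive root $x=\sum_i\alpha_i v_i$ satisfies $0\leq \alpha_i\leq n_i$ for all $i$. As recalled just before the lemma, a vertex $v_i$ is simple precisely when it appears with multiplicity one in the primitive isotropic divisor of the extended Dynkin diagram $\tilde R$, i.e.\ when $n_i=1$ in the highest-root expansion. For such $v_i$ the bound collapses to $0\leq\alpha_i\leq 1$, whence $\alpha_i\in\{0,1\}$ in the positive case and $\alpha_i\in\{-1,0\}$ in the negative case; either way $\alpha_i\in\{-1,0,1\}$, as required.

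The only real obstacle is the highest-root dominance, but this is an entirely standard fact from root-system theory and constitutes no more than a routine step here. An alternative, should one prefer to keep the argument fully self-contained, is to invoke the explicit list of roots in each of the types $A_n$, $D_n$, $E_6$, $E_7$ and simply read off the coefficients at the simple vertices $a_i$, $d_1,d_{n-1},d_n$ and $e_2,e_6,e_7$; this would match the case-by-case style of Table~\ref{T1}.
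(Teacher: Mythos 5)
Your proposal is correct. Note that the paper itself offers no proof of this lemma: it is presented as one of the ``interrelated classical facts'' about simple vertices, with the justification delegated to the standard references \cite{bourbaki-lie}, \cite{CS}. Your argument is precisely the classical one that such a citation points to, so it fills the gap in the intended way. The reduction to an irreducible summand via \eqref{eq:oplus} is sound (a vector of square $-2$ in an orthogonal sum of even lattices must lie in a single summand), and the decisive step --- identifying the paper's notion of \emph{simple vertex} (multiplicity $1$ in the primitive isotropic divisor of the extended Dynkin diagram $\tilde R$) with the condition $n_i=1$ on the mark of $v_i$ in the highest root $\theta=\sum_i n_i v_i$ --- is exactly the paper's own equivalent characterization stated just after the Definition. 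Combined with highest-root dominance ($0\leq\alpha_i\leq n_i$ for every positive root), this gives $\alpha_i\in\{0,1\}$ in the positive case and $\alpha_i\in\{-1,0\}$ after the sign flip, as you say. The only cosmetic caveat is that the paper's root lattices are negative-definite by Convention, whereas the root-system facts you quote are usually stated in the positive-definite setting; since negating the bilinear form changes neither the roots nor their coefficients on the standard basis, this is immaterial, but a half-sentence acknowledging it would make the argument airtight. Your alternative suggestion --- a case-by-case check of the root tables for $A_n$, $D_n$, $E_6$, $E_7$ --- would also work and matches the computational style of Sections \ref{ss:1st}--\ref{ss:4th}, but the dominance argument is cleaner and uniform.
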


We note an important consequence
which will be instrumental for our considerations to follow.

\begin{Corollary}
\label{cor:v.x}
If $v$ is a simple vertex and $x$ a root in $R$,
then $v^\vee.x\in\{-1,0,1\}$.
\end{Corollary}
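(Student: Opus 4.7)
The plan is to derive this corollary as an essentially immediate consequence of the preceding lemma, by unwinding what the pairing $v^\vee.x$ computes in coordinates. First I would recall that, by construction, the dual vectors $v_i^\vee \in R^\vee$ associated with the standard basis $\{v_1,\dots,v_n\}$ of $R$ are characterized by $v_i^\vee.v_j = \delta_{ij}$. Extending bilinearly, this means that for any element $x=\sum_i \alpha_i v_i \in R$ one has $v_i^\vee.x = \alpha_i$, i.e.\ pairing with $v_i^\vee$ simply extracts the coefficient of $v_i$ in the standard-basis expansion.

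Applying this to the setting of the corollary, I would write $x = \sum_j \alpha_j v_j$ in the standard basis and identify $v^\vee.x$ with the coefficient $\alpha_i$ corresponding to the simple vertex $v=v_i$. The previous lemma states precisely that such a coefficient lies in $\{-1,0,1\}$, giving the claim. If $R$ happens to be reducible via \eqref{eq:oplus}, the simple vertex $v$ belongs to a unique irreducible summand $R_j$, and its dual vector $v^\vee$ is orthogonal to every other summand; since any root $x$ is itself contained in one of the summands, the computation reduces to the irreducible case without change.

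The only genuine obstacle here is terminological bookkeeping: one must make sure that the ``dual vector'' $v^\vee$ of a simple vertex (whose concrete form was illustrated in the table of Section~2 and in Lemma~\ref{lem:a_i}) really is the same as the element of the basis of $R^\vee$ dual to $v$ in the standard basis of $R$. Once this identification is in place, the corollary is a direct transcription of the previous lemma, and no further computation is needed.
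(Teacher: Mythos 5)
Your proposal is correct and matches the paper's (implicit) argument: the paper states the corollary as an immediate consequence of the preceding lemma, and your unwinding --- that pairing with $v^\vee$ extracts the coefficient of $v$ in the standard-basis expansion of the root $x$, since $v_i^\vee.v_j=\delta_{ij}$ under the paper's conventions (consistent with Lemma~\ref{lem:a_i} and Lemma~\ref{lem:2cases}) --- is exactly the intended reasoning. Note also that even if one adopted the opposite sign convention $v_i^\vee.v_j=-\delta_{ij}$, the conclusion would be unaffected because the set $\{-1,0,1\}$ is symmetric, so your argument is robust to that bookkeeping point.
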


\begin{Remark}
\label{rem:no}
The statement of Corollary \ref{cor:v.x} does no longer hold true
if $v$ is not a simple vertex.
Indeed, then there is some root $x\in R$ such that $v^\vee.x\geq 2$.
\end{Remark}

The statement of Corollary \ref{cor:v.x} does, however, 
generalize to certain other elements in $R^\vee$ as we shall see and utilize crucially in Lemma \ref{lem:w.x}.

\section{Small vectors}

Recall that in the end we are determined to study vectors of square $-1$ or $-2$ in $R'\subset R^\vee$.
Necessarily these are composed of small (or zero) vectors from the dual lattices $R_i^\vee$
of the orthogonal summands which we shall thus study in a little more detail here.

\begin{Convention}
Following the standard terminology inspired from the positive-definite case,
we let 'small' refer to the absolute value.
\end{Convention}

\subsection{Smallest vectors}
\label{ss:1st}

If $R$ denotes an irreducible root lattice as before,
then
the value of the smallest non-zero vectors in $R^\vee$ is exactly given in the Table \ref{T1}.
-- except for $D_n$ for odd $n>4$ where $(d_1^\vee)^2=-1$ attains the minimum.
%A priori, a smallest vectors $v_i$ may always contribute to $v$,
%since $v_i^2>-1$ (although expressing the difference $v^2-v_i^2$ by other  vectors from the $R_i^\vee$ gives severe restrictions
%in case $R_i=A_n$, notably because either $v_i^2=0$ or $v_i^2\leq -\frac 12$).

It is well known that the smallest vectors form exactly one, two or three orbits under the action of the Weyl group $W(R)$,
and that each orbit is generated by the dual vector of some simple vertex.
In detail, we have:

$$
\begin{array}{c||c|c|c|c|c|c}
R & A_1 & A_n \, (n>1) & D_4 & D_n \, (n>4) & E_6 & E_7\\
\hline
\# \text{ orbits} & 1 & 2 & 3 & 1 & 2 & 1\\
\text{generators} & 
a_1^\vee &
a_1^\vee, a_n^\vee &
d_1^\vee, d_2^\vee, d_3^\vee &
d_1^\vee &
e_2^\vee, e_6^\vee &
e_7^\vee
\end{array}
$$

%
%generated by 
%\[
%a_1^\vee (A_1), 
%\pm a_1^\vee (A_n, n>1),
%d_1^\vee, d_2^\vee, d_3^\vee (D_4),
%d_1^\vee (D_n, n>4),
%\pm e_6^\vee (E_6), 
%e_7^\vee (E_7).
%\]
%size
%
%

\subsection{2nd smallest vectors}
\label{ss:2nd}

For the second smallest non-zero vectors in the dual $R^\vee$ of an irreducible root lattice,
we did not find a proper reference, 
but all claims in this paragraph (and in \ref{ss:3rd}, \ref{ss:4th})
can be verified directly 
(for instance, utilizing the standard model of $A_n$ as
trace zero hypersurface inside euclidean  $\Z^{n+1}$, up to sign,
with Weyl group $W(A_n)\cong S_{n+1}$ acting by permutation of coordinates).
%\cap \{\sum_{i=1}^{n+1} \alpha_i=0\}$)
%so we verified all the claims in this paragraph (and in \ref{ss:3rd}, \ref{ss:4th})
%by hand by computing the number of vectors with given square (for instance, with the help of some computer algebra system
%such as MAGMA)
%and comparing with the $W(R)$-orbits of the dual vectors (of the given square) of the simple vertices in $R$.

The second smallest non-zero vectors in $R^\vee$ have square
\begin{itemize}
\item
$(a_2^\vee)^2 = -\frac {2(n-1)}{n+1}$ for $A_n \, (n>2)$,
\item
$(d_n^\vee)^2 = -\frac n4$ for $D_n \, (4< n \leq 8)$, and
\item
 $v^2=-2$, attained by any root  $v\in R$, but by no element in $R^\vee\setminus R$,
for all other irreducible root lattices $R$.
\end{itemize}
Obviously, the third case cannot contribute to our problem,
so we simply analyse the first two settings.
%(and the first only in a certain range which will be sufficient for our purposes,
%cf.~Lemma \ref{lem:range}).
$$
\begin{array}{c||c|c|c|c}
R & A_3 & A_n \, (n>3) & D_n \, (4<n\leq 7) & D_8\\
\hline
\# \text{ orbits of 2nd smallest vectors} & 1 & 2 & 2 & 3\\
\text{generators} & 
a_2^\vee &
a_2^\vee, a_{n-1}^\vee &
d_{n-1}^\vee, d_n^\vee & 
d_{n-1}^\vee, d_n^\vee, \text{ any root}
\end{array}
$$

\subsection{3rd smallest vectors}
\label{ss:3rd}

With the third smallest vectors,
the problem becomes even easier because there are only a finite number of cases left
with value $\geq -2$ attained by some vector in $R^\vee\setminus R$.
Indeed, all $D_n$ and $E_n$ lattices are excluded anyway,
same for $A_1,\hdots, A_4$,
and for $A_n \, (n>4)$, 
the 3rd smallest value in $A_n^\vee\setminus A_n$ is attained by
\[
(a_3^\vee)^2 = -\frac {3(n-2)}{n+1}
\]
which exceeds $-2$ already starting from $n=9$.
For the remaining root lattices, we compute:
$$
\begin{array}{c||c|c|c}
R & A_5 & A_n \, (n=6,7) & A_8\\
\hline
\# \text{ orbits of 3rd smallest vectors} & 1 & 2 & 3\\
\text{generators} & 
a_3^\vee &
a_3^\vee, a_{n-2}^\vee &
a_{3}^\vee, a_6^\vee, \text{ any root}
\end{array}
$$

\subsection{4th smallest vectors}
\label{ss:4th}

Along the same lines, the case of the 4th smallest non-zero vector
boils down to the root lattice $A_7$ right away
(as far as our problem is concerned),
with value $-2$ and two orbits generated by $a_4^\vee$ and any root of $A_7$.

For the record, we point out that the fifth smallest non-zero vectors $w$ in $R^\vee$
(and beyond) always fulfill $w^2<-2$.

\subsection{Applications to intersection numbers}

We note an innocent, but very useful consequence of our above findings
for intersection numbers with roots in $R$:

\begin{Lemma}
\label{lem:w.x}
Let $w\in R^\vee\setminus R$ be a small vector in the range of \ref{ss:1st} -- \ref{ss:4th}.
Then for any root $x\in R$ we have
\[
w.x\in\{-1,0,1\}.
\]
\end{Lemma}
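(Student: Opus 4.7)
The plan is to reduce the statement directly to Corollary~\ref{cor:v.x}, by showing that every admissible $w$ lies in the Weyl orbit of a dual vector $v^\vee$ of some \emph{simple} vertex $v$ of $R$. Since a reducible root lattice decomposes orthogonally as $R = \bigoplus_i R_i$ with $R^\vee = \bigoplus_i R_i^\vee$ and any root $x$ lies in a single summand, I may assume $R$ is irreducible from the outset.

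The first step is a case-by-case inspection of the tables in Sections~\ref{ss:1st}--\ref{ss:4th}. I would discard the orbits whose generators are roots (these lie in $R$, hence violate the hypothesis $w \in R^\vee \setminus R$). The remaining generators appearing are of the form $a_i^\vee$ for various $i$ in $A_n$, among $d_1^\vee, d_{n-1}^\vee, d_n^\vee$ for $D_n$, $e_2^\vee$ or $e_6^\vee$ for $E_6$, and $e_7^\vee$ for $E_7$; comparing with the explicit enumeration of simple vertices in Section~3, each of these is indeed the dual of a simple vertex.

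With the first step in hand, the conclusion follows from $W(R)$-invariance of the intersection form. Writing $w = \sigma(v^\vee)$ for some $\sigma \in W(R)$ and a simple vertex $v$, one has
\[
w.x = \sigma(v^\vee).x = v^\vee.\sigma^{-1}(x),
\]
and since the Weyl group permutes the roots of $R$, the element $\sigma^{-1}(x)$ is itself a root. Corollary~\ref{cor:v.x} then applies and yields $w.x \in \{-1,0,1\}$, as desired.

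The only real effort goes into the bookkeeping in the first step, but this is hardly an obstacle: the orbit data have already been compiled in the preceding subsections, and checking that every listed generator (outside the excluded root orbits) comes from a simple vertex is immediate from the enumeration in Section~3.
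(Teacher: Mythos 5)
Your proof is correct and takes essentially the same route as the paper's own: both arguments identify $w$ as lying in the $W(R)$-orbit of the dual vector $v^\vee$ of a simple vertex (which is exactly the content of the orbit tables in \ref{ss:1st}--\ref{ss:4th}), and then combine Weyl-invariance of the pairing with Corollary \ref{cor:v.x}. Your explicit reduction to irreducible $R$ and the generator-by-generator bookkeeping simply spell out what the paper compresses into ``by our previous considerations.''
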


\begin{proof}
By our previous considerations, there is some element $g\in W(R)$,
with action extended to $R^\vee\subset R\otimes\Q$,
 mapping $w$ to 
the dual vector $v^\vee$ of some simple vertex $v\in R$.
Hence
\[
w.x = g(w).g(x) = v^\vee.g(x) \in \{-1,0,1\}
\]
by Corollary \ref{cor:v.x} since $g(x)$ is, of course, again a root in $R$.
\end{proof}

Applied to the standard basis $v_1,\hdots, v_n$ of $R$,
this has an interesting consequence which will become important in the next section:

\begin{Lemma}
\label{lem:2cases}
Let $w\in R^\vee\setminus R$ be a small non-zero vector in the range of \ref{ss:1st} -- \ref{ss:4th}.
Then 
\begin{itemize}
\item
either there is some $i\in\{1,\hdots,n\}$ such that $w.v_i=-1$ 
\item
or there is some simple vertex $v\in R$ such that $w=v^\vee$.
\end{itemize}
\end{Lemma}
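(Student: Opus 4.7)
The plan is to combine Lemma \ref{lem:w.x} with the fundamental-domain property of the Weyl chamber. First I apply Lemma \ref{lem:w.x} to each simple root $v_i$, forcing $w.v_i \in \{-1, 0, 1\}$. If some $w.v_i = -1$, we are already in the first alternative, so I may assume $w.v_i \in \{0, 1\}$ for all $i$ and aim to conclude that $w = v^\vee$ for some simple vertex $v$.

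The key observation is that the dual basis $\{v_j^\vee\}$ is characterized by $v_j^\vee . v_i = \delta_{ij}$, so the standing assumption $w.v_i \geq 0$ for all $i$ says precisely that $w$ lies in the closed fundamental Weyl chamber
\[
\overline{C} = \{x \in R \otimes \R : x.v_i \geq 0 \text{ for all } i\}.
\]
Since $\overline{C}$ is a fundamental domain for the action of $W(R)$ on $R\otimes\R$, each $W(R)$-orbit meets $\overline{C}$ in exactly one point, namely its unique dominant representative.

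It remains to identify this dominant representative in the orbits of interest. By the classification assembled in \ref{ss:1st}--\ref{ss:4th}, every $W(R)$-orbit of small vectors in $R^\vee$ has a listed generator that is either a root of $R$ --- which lies in $R$ and hence is excluded from $R^\vee\setminus R$ --- or the dual basis vector $v^\vee$ attached to a simple vertex $v$. Any such $v^\vee$ is itself dominant since $v^\vee . v_i = \delta_{vi} \geq 0$, so it must coincide with the unique dominant representative of its orbit. Consequently $w = v^\vee$ for some simple vertex $v$, as required. The only substantive bookkeeping --- and the closest thing to an obstacle --- is the direct case-by-case verification that every non-root orbit generator appearing in the tables really does correspond to a simple vertex, but this is visible by inspection ($A_n$ has all vertices simple; for $D_n$ only $d_1, d_{n-1}, d_n$ appear; for $E_6$ only $e_2, e_6$; for $E_7$ only $e_7$).
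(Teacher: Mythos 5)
Your proof is correct, but its core runs along a genuinely different track from the paper's. Both arguments open identically: Lemma \ref{lem:w.x} forces $w.v_i\in\{-1,0,1\}$, and one reduces to the dominant case $w.v_i\geq 0$ for all $i$. From there the paper stays elementary and combinatorial: since $R$ is non-degenerate and $w\neq 0$, some $w.v_i=1$; if two indices $i\neq j$ had $w.v_i=w.v_j=1$, pairing $w$ with the root $x=v_1+\cdots+v_n$ (the sum of all simple roots of an irreducible root lattice is again a root) would give $w.x\geq 2$, contradicting Lemma \ref{lem:w.x}; hence $w=v_i^\vee$ for exactly one index, and the simplicity of $v_i$ is then extracted from Remark \ref{rem:no}. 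You instead invoke the strict-fundamental-domain theorem for finite reflection groups: the closed chamber $\overline{C}=\{x : x.v_i\geq 0 \text{ for all } i\}$ meets each $W(R)$-orbit exactly once, so the dominant vector $w$ must equal the dominant representative of its orbit, which by the tables in \ref{ss:1st}--\ref{ss:4th} is $v^\vee$ for a simple vertex $v$ (root orbits being excluded since $w\notin R$, as $W(R)$ preserves $R$). Your route buys a cleaner endgame --- simplicity of $v$ comes for free from the classification instead of via Remark \ref{rem:no}, and you re-use exactly the orbit data that already underlies the paper's proof of Lemma \ref{lem:w.x} --- at the cost of appealing to a heavier (though completely standard) theorem, together with the small unstated check that under the paper's negative-definite convention $\overline{C}$ really is a chamber closure (it is: flipping the sign of the form turns it into the antidominant chamber). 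One cosmetic point: your ``by inspection'' claim that all non-root generators in the tables are duals of simple vertices is mathematically right, but the table in \ref{ss:1st} literally prints $d_2^\vee$ for $D_4$, an evident typo for the outer vertices $d_1^\vee, d_3^\vee, d_4^\vee$ under the paper's labeling; your reading (only $d_1, d_{n-1}, d_n$ occur for $D_n$) is the intended and correct one.
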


\begin{proof}
By Lemma \ref{lem:w.x}, we have $w.v_i\in\{-1,0,1\}$ for all $i=1,\hdots,n$.
Assume that $w.v_i\geq 0$ for all $i$.
Since $R$ is non-degenerate and $w\neq 0$,
there is some $i$ such that $w.v_i=1$.
Assume that there is $j\neq i$ with $w.v_j=1$ as well.
Then consider the root $x=v_1+\hdots+v_n$.
By assumption, $w.x\geq 2$,
contradicting Lemma \ref{lem:w.x}.
Hence $w.v_j=0$ for all $j\neq i$,
and thus $w=v_i^\vee$.
Here $v_i$ is a simple vertex since by Lemma \ref{lem:w.x},
$w.x\in\{-1,0,1\}$ for any root in $R$ which does not
hold for any other vertex by Remark \ref{rem:no}.
\end{proof}

\section{Proof of Theorem \ref{thm}}
\label{s:pf}

We return to the situation from \eqref{eq:oplus} where $R$ is no longer assumed to be irreducible.
Recall that in view of Theorem \ref{thm},
we are interested in vectors 
\begin{eqnarray}
\label{eq:w}
w\in R'\setminus R \;\;\; \text{  with } \;\;\; w^2 = -1, -2.
\end{eqnarray}
Writing $w=(w_1,\hdots,w_m)$, we infer that each $w_i$ is either zero or a small vector in $R_i^\vee\setminus R_i$.
For the record, we note the following observation:

\begin{Lemma}
\label{lem:range}
Assume that $w_i\neq 0$. Then $w_i$ is  in one of the orbits listed in \ref{ss:1st} -- \ref{ss:4th}.
\end{Lemma}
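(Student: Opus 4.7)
The plan is to use the orthogonal decomposition $R = \bigoplus_j R_j$ to reduce the claim to a per-summand estimate. With respect to $R^\vee = \bigoplus_j R_j^\vee$, write $w = (w_1, \ldots, w_m)$ with $w_j \in R_j^\vee$. The intersection form decomposes as $w^2 = \sum_{j=1}^m w_j^2$, and by our standing convention that root lattices are negative-definite, every summand satisfies $w_j^2 \leq 0$, with equality exactly when $w_j = 0$.

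The first key step is the elementary inequality that pins down the size of each non-zero component. Since all terms $w_j^2$ are non-positive, dropping the ones with $j\neq i$ only makes the sum larger, so
\[
w_i^2 \;\geq\; \sum_{j=1}^m w_j^2 \;=\; w^2 \;\geq\; -2
\]
by the hypothesis \eqref{eq:w}. Together with $w_i^2 \leq 0$ and $w_i \neq 0$, this confines $w_i$ to the non-zero vectors in $R_i^\vee$ whose square lies in the interval $[-2, 0)$.

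The second step is to appeal to the case-by-case enumeration carried out in the previous section. Subsections \ref{ss:1st}--\ref{ss:4th} catalogue, level by level and organized into orbits under $W(R_i)$, all non-zero vectors of $R_i^\vee$ whose squares fall below $-2$ in absolute value; the concluding remark of \ref{ss:4th} records that from the fifth level onwards one always has $w^2 < -2$. Since the orbits listed there collectively exhaust every such level (not merely a choice of representative), any non-zero $w_i \in R_i^\vee$ with $w_i^2 \geq -2$ must belong to one of the orbits in \ref{ss:1st}--\ref{ss:4th}.

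No serious obstacle appears in this argument: the estimate $w_i^2 \geq -2$ is a one-line consequence of negative-definiteness together with the bound on $w^2$, and the real work has already been carried out in the preceding enumeration. The only point to verify is that the classification truly covers every vector of square $\geq -2$ and not just a chosen generator in each orbit, which is automatic from how Sections \ref{ss:1st}--\ref{ss:4th} are organized.
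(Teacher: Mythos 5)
Your orthogonality estimate $w_i^2 \geq w^2 \geq -2$ is correct, and it is indeed the first half of what the paper intends (the lemma is stated there as an observation, with no explicit proof written out). But your second step contains a genuine gap: it is \emph{not} true that \ref{ss:1st}--\ref{ss:4th} catalogue all non-zero vectors of $R_i^\vee$ of square $\geq -2$. The roots of $R_i$ are non-zero vectors of square exactly $-2$ lying in $R_i^\vee$, and for most lattices their orbit is deliberately excluded from the tables: in \ref{ss:2nd} the case ``$v^2=-2$, attained by any root $v\in R$, but by no element in $R^\vee\setminus R$'' is explicitly set aside (``the third case cannot contribute to our problem''), and root orbits appear in the listed tables only in the accidental ties $D_8$, $A_8$ and $A_7$. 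So from $w_i\neq 0$ and $w_i^2\geq -2$ alone you cannot conclude that $w_i$ lies in a listed orbit: a priori $w_i$ could be a root of, say, an $A_2$ or $E_7$ summand, and no listed orbit contains such a vector. The point is not pedantic, because the listed orbits are designed to feed Lemma \ref{lem:w.x}, which fails for roots: a root $x$ satisfies $x.x=-2\notin\{-1,0,1\}$.

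What is missing is exactly the one place where the hypothesis $w\in R'\setminus R$ --- which you never invoke --- has to be used. If some non-zero component satisfied $w_i\in R_i$, then $w_i^2\leq -2$ since $R_i$ is an even negative-definite lattice; your own estimate then forces $w^2=w_i^2=-2$ and $w_j=0$ for all $j\neq i$, whence $w=w_i\in R_i\subset R$, contradicting $w\notin R$. Hence every non-zero component lies in $R_i^\vee\setminus R_i$, and for \emph{those} vectors the enumeration of \ref{ss:1st}--\ref{ss:4th}, together with the closing remark of \ref{ss:4th} that from the fifth level onwards all squares are $<-2$, really is exhaustive. With this added step your argument closes up and coincides with the paper's intended reasoning (which records precisely this membership $w_i\in R_i^\vee\setminus R_i$ in the sentence immediately preceding the lemma).
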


\begin{Remark}
We will not need this in the sequel,
but the second smallest vectors in $A_n$ for $n\geq 8$
can also be excluded as follows:
we have
\[
-2 < (a_2^\vee)^2 = -\frac {2(n-1)}{n+1} < -\frac 32.
\]
Since $v^2\leq -\frac 12$ for any non-zero $v\in R^\vee$,
we infer that $a_2^\vee$ cannot be complemented by any vector from some dual of a root lattice to have square $-2$.
\end{Remark}

We are now in the position to attack the proof of Theorem \ref{thm}.
This puts us in the situation of \eqref{eq:w} 
with the crucial addition that the vertices $v_j$ of $R\subset\Num(S)$ are classes of $(-2)$-curves $C_j\subset S$.
Our argument is inspired by discussions with S.~Rams on the case of $R=4A_2$ on Enriques surfaces
(see \cite{RS}) which themselves received crucial input from \cite[Lem.~1.1]{br}.

Assume that $w\geq 0$ as a divisor on $S$.
We argue componentwise using the expression $w=(w_1,\hdots,w_m)$,
so let us fix the $i$th component and assume that $R_i^\vee\ni w_i\neq 0$,
so that $w_i$ is a small vector in the range of \ref{ss:1st} -- \ref{ss:4th} by Lemma \ref{lem:range}.
Numbering the vertices of $R_i$ by $v_1,\hdots, v_n$ as before,
there are two cases by Lemma \ref{lem:2cases}:
\begin{itemize}
\item
either $w_i.v_j\geq 0$ for each $j=1,\hdots,n$,
so $w_i=v^\vee$ for some simple vertex $v\in R_i$;
\item
or there is some vertex $v_j$ such $w_i.v_j=-1$.
\end{itemize}
We shall now show how the second case successively leads to the first.
Since $w_i.C_j=-1$ and $w\geq 0$, we infer that $C_j$ is contained in the support of $w_i$,
so $w_i-C_j$ is still effective.
On the other hand, incidentally, $w_i-C_j$ is the reflection of $w_i$ in $C_j$,
so the two vectors have the same square.
Thus we can iterate the above process.
Necessarily this procedure terminates since at each step,
the sum of the $\Q$-coefficients of the small vector,
expressed in the $v_j$ as in \eqref{eq:v}, drops by one
while there are only finitely many vectors of given square, of course.
In the end, we obtain a vector $w_i'\in R_i^\vee$ of the same square as $w_i$
(equivalent under reflections, in fact), such that
\[
w_i'.v_j\geq 0 \;\;\; \text{  for all } \;\; j=1,\hdots,n.
\]
As we have seen, the construction preserves effectivity: 
\begin{eqnarray}
\label{eq:>}
w_i'\geq 0.
\end{eqnarray}
On the other hand, $w_i'$ equals the dual vector of some simple vertex $v\in R_i$ by Lemma \ref{lem:2cases}.
But then all $\Q$ coefficients of $w_i'=v^\vee$ are negative by Lemma \ref{lem:a_i}.

Since the irreducible root lattices $R_i$ are orthogonal,
we can carry out the above procedure for all $w_i$ separately.
Ultimately, we arrive at an effective divisor $w'=(w_1',\hdots,w_m')$
all whose components have zero or negative $\Q$-coefficients in terms of
the given basis of $R$ consisting of $(-2)$-curves.
Hence there is some integral multiple $$Mw'\leq 0 \; \;(M\in\N),$$ giving the required contradiction
since $w'$ was still shown to be effective (as a consequence of \eqref{eq:>}), and clearly non-zero.

If $w\leq 0$, then reverse the sign and proceed as above.
This completes the proof of Theorem \ref{thm}.
\qed

\subsection*{Acknowledgements}

Many thanks to Slawomir Rams and Tetsuji Shioda for  helpful comments and discussions.

\end{document}